\documentclass[10pt]{amsart}
\usepackage{amsmath}
\usepackage{amscd}
\usepackage{amssymb}
\usepackage{amsthm}
\usepackage[shortlabels]{enumitem}
\usepackage{mathtools}
\usepackage{stmaryrd}
\usepackage{hyperref}

\newtheorem{theorem}{Theorem}[section]

\newtheorem{proposition}[theorem]{Proposition}
\newtheorem{definition}[theorem]{Definition}

\newtheorem{conj}[theorem]{Conjecture}

\theoremstyle{definition}

\newcommand{\N}{\mathbb{N}}

\newcommand{\cN}{{\mathcal N}}

\title{Strongly 1-bounded inner amenable groups}

\author[B. Hayes]{Ben Hayes}

\address{\parbox{\linewidth}{Department of Mathematics, University of Virginia\\
141 Cabell Drive, Kerchof Hall
P.O. Box 400137,
Charlottesville, VA 22904}}
\email{brh5c@virginia.edu}
\urladdr{https://sites.google.com/site/benhayeshomepage/}

\author[S. Kunnawalkam Elayavalli]{Srivatsav Kunnawalkam Elayavalli}
\address{\parbox{\linewidth}{Department of Mathematics, University of Maryland, College Park, \\
		4176 Campus Dr, College Park, MD 20742}}
\email{sriva@umd.edu}
\urladdr{https://sites.google.com/view/srivatsavke}

\begin{document}

\begin{abstract}
It is shown that finitely presented icc inner amenable groups yield strongly 1-bounded II$_1$ factors.
\end{abstract}

	\maketitle

    \section{Introduction}

Building on Voiculescu's free entropy theory \cite{PicuSurvey}, Jung introduced the influential notion of strong 1-boundedness \cite{Jung2007} with several applications to the study of II$_1$ factors. A striking feature of this property is the ability to account for amenable subalgebras in rigidity arguments. A testament to this is the resolution of the Peterson-Thom conjecture by the first author \cite{hayespt} using his 1-bounded entropy techniques \cite{Hayes2018} in conjunction with strong convergence arguments in random matrix theory \cite{belinschi2022strong, bordenave2023norm, CGVHIIStrong, MdLSstrongasymptoticfreenesshaar, Parraud2024strong}. Precisely this resolution locates the amenable subalgebras of interpolated free group factors as exactly those with zero $1$-bounded entropy \cite{UselessResolutionOfPT}. The purpose of this short note is to emphasize that the more general phenomena of inner amenability \cite{Ef75} also fits in well with the framework of strong 1-boundedness. This was raised as a question by Y. Ueda to the second author during the 2019 BIRS workshop ``Classification problems in von Neumann algebras''.   

\begin{theorem}
Let $G$ be a finitely presented icc inner amenable group. Then $L(G)$ is strongly 1-bounded.
\end{theorem}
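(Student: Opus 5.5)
Our plan is to combine three ingredients from the literature: Jung's criterion in terms of the $L^2$-Betti number / presentation data, Hayes' $1$-bounded entropy $h(\cdot)$, and Effros-type central sequence arguments coming from inner amenability. Recall that for a finitely generated tracial von Neumann algebra $M$, strong $1$-boundedness of $M$ is equivalent to $h(M)<\infty$, and that a diffuse $M$ with $h(M)=0$ is strongly $1$-bounded. Moreover, the result of Jung (refined by Shlyakhtenko) says that if $G$ is finitely presented and $\beta^{(2)}_1(G)=0$, then $L(G)$ is strongly $1$-bounded; this is where finite presentation enters. So the plan is to show $\beta^{(2)}_1(G)=0$ for icc inner amenable $G$, or, failing that, to directly show $h(L(G))\le 0$ via the entropy machinery.

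\textbf{Step 1: vanishing of the first $L^2$-Betti number.} Inner amenability gives a conjugation-invariant mean $m$ on $G\setminus\{e\}$. I would use it, in the spirit of Chifan--Sinclair--Udrea, to show that every $1$-cocycle $b\colon G\to \ell^2(G)$ (for the left regular representation) is inner. The idea: for a cocycle $b$, consider the averages $\xi_F=\sum_{h} \mu(h)\, b(h)$ over finitely supported approximants $\mu$ of the invariant mean; conjugation invariance of $m$ and the cocycle identity $b(ghg^{-1})=b(g)+gb(h)-ghg^{-1}b(g)$ give that $b(g)$ is asymptotically $\xi- g\xi$ modulo terms controlled by $\|\lambda(h)\cdot\,\|$ mixing. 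Icc is used to guarantee that $m$ can be taken supported on elements escaping to infinity, so $\langle \lambda(ghg^{-1})\eta,\zeta\rangle\to 0$ along $m$ for $\eta,\zeta\in\ell^2(G)$ (mixing of the regular representation). This yields $\overline{H}^1(G,\ell^2G)=0$, i.e.\ $\beta^{(2)}_1(G)=0$. Actually, rather than prove this directly, I expect one can quote: inner amenable groups have $\beta^{(2)}_1(G)=0$ (Chifan--Sinclair--Udrea).

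\textbf{Step 2: from $\beta^{(2)}_1=0$ to strong 1-boundedness.} Invoke Jung's theorem: if $G$ is finitely presented (so the relation module is finitely generated and one has a finite truncated resolution) and $\beta^{(2)}_1(G)=0$, then the generators of $L(G)$ have microstates free entropy dimension at most $1$ in the strong sense, i.e.\ $L(G)$ is strongly $1$-bounded; diffuseness is supplied by icc (in fact $L(G)$ is a II$_1$ factor). Alternatively use the Shlyakhtenko formulation via the rank of the Fox-derivative matrix: $\dim$ of the kernel of $\partial$ on $L(G)^n$ equals $1-\beta_0+\beta_1 = 1$ when $\beta^{(2)}_1=0$, giving the needed estimate $\delta_0\le 1$ with the strong bound.

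The main obstacle is Step 1 if one must prove it from scratch: passing from an invariant mean (a finitely additive object) to an honest vector $\xi\in\ell^2(G)$ with $b=\xi-\lambda\xi$ requires a weak-limit argument and a uniform bound on the averaged cocycle, which is not automatic since $b$ is unbounded; the fix I would try first is to replace the mean by an asymptotically conjugation invariant sequence of unit vectors in $\ell^2(G\setminus\{e\})$ (Effros' characterization) and to work with the diagonal $G$-action on $\ell^2(G)\otimes\overline{\ell^2(G)}$, showing only that $b$ lies in the closure of inner cocycles, which suffices for reduced cohomology.
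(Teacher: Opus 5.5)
There is a genuine gap in Step 2. Your Steps 1 and 2 are essentially the paper's argument for one of its two cases, but Step 2 quotes the Jung--Shlyakhtenko theorem without its essential hypothesis. What is known is that a finitely presented \emph{sofic} group with $\beta^{(2)}_1(G)=0$ has strongly 1-bounded $L(G)$.

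The rank computation you describe is only a dimension count. Passing from it to the uniform covering estimate that defines strong 1-boundedness needs quantitative control of the small singular values of the Fox-derivative matrix of the relators, i.e.\ a determinant-class condition. For matrices over $\mathbb{Z}G$ this comes from the Elek--Szab\'o theorem that sofic groups satisfy L\"uck's determinant conjecture, which is open in general. Soficity also guarantees that microstates exist at all. Your phrase ``with the strong bound'' is exactly where this is hidden. So as written your proof covers only sofic $G$. Step 1 is fine by citation: the paper uses Tucker-Drob's Corollary 6 for exactly this, so your hand-made sketch is unnecessary.

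The non-sofic case needs a genuinely different mechanism. Your closing fallback (``directly show $h(L(G))\le 0$'') gestures at it but does not supply it. The paper's route is as follows.
\begin{enumerate}
\item Let $m$ be the conjugation-invariant mean and set $N=\{g: m(C_G(g))=1\}$. Conjugation invariance makes $N$ a normal subgroup.
\item By Elek--Szab\'o (Prop.~5.1), if $N$ is trivial then $G$ is sofic. Hence for non-sofic $G$ we have $N\neq\{1\}$, and icc makes $N$ infinite.
\item Finite intersections of centralizers of elements of $N$ have full $m$-measure. Since $G$ is icc, $m$ kills finite sets. So there are $g_n\to\infty$ eventually commuting with every element of $N$, and thus $L(N)'\cap L(G)^\omega$ is diffuse.
\item Hayes' Proposition 4.5 then gives $h(L(N):L(G))\le 0$.
\item Since $N\trianglelefteq G$, $L(G)$ is generated by the normalizer of $L(N)$. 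Hayes' Theorem 1.3 then yields $h(L(G))\le 0$, hence strong 1-boundedness.
\end{enumerate}
This branch uses neither $\beta^{(2)}_1$ nor finite presentation. The sofic/non-sofic dichotomy, with Elek--Szab\'o's result as the bridge, is the missing idea.
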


The proof of the above result is short combining various existing results, including those of \cite{Hayes2018, ElekSzaboDeterminant, TuckerMeans, Shl2015}. Our main motivation to release this note is that we find it rather \emph{cute} that our argument splits into two cases depending on whether the group is sofic or not. It remains open whether all countable inner amenable groups $G$ satisfy $h(L(G))\leq 0$. We point out that the following is currently known about the von Neumann algebras associated to inner amenable groups $G$: $L(G)$ is not properly proximal \cite{DKEPfinal}; if $G$ is additionally nonamenable $L(G)$ is $\{\alpha_t\}$-rigid \cite{DrimbeComp}. However to our knowledge, it remains open whether $L(G)$ is $L^2$-rigid in the sense of Peterson \cite{PetersonDeriva}. We also define the notion of an inner amenable $C^*$-probability space, and conjecture that the GNS closures ought to be strongly 1-bounded.

\subsection*{Acknowledgements} We thank Robin Tucker-Drob for an insightful suggestion.

\section{Proof of the main result}

Recall the following: A countable discrete group $G$ is called inner amenable if the action of $G$ on the set $X=G\setminus \{1\}$ by conjugation admits an invariant mean. A countable discrete group $G$ is called strongly 1-bounded if the group von Neumann algebra $L(G)$ is strongly 1-bounded in the sense of Jung \cite{Jung2007}. We will crucially need the following result:

\begin{proposition}[Consequence of Proposition 5.1 in \cite{ElekSzaboDeterminant}] \label{elek}
If $G$ is inner amenable with conjugation invariant mean $\mu$, and if $N= \{g\in G: \ \mu(X_g)=1\}$ is trivial (where $X_g$ is the centralizer of $g$, and $\mu$ is the conjugation invariant mean), then $G$ is sofic. 
\end{proposition}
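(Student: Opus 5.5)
The plan is to build a sofic approximation of $G$ directly from the conjugation action of $G$ on $X=G\setminus\{1\}$, using the invariant mean $\mu$ to produce finite almost-invariant sets. The hypothesis $N=\{1\}$ will guarantee that every nontrivial element moves a definite proportion of points. The key observation is that for $g\in G$ and $h\in X$ we have $ghg^{-1}=h$ if and only if $h\in X_g$. So the fixed point set of $g$ acting on $X$ is exactly $X_g\setminus\{1\}$, and triviality of $N$ says precisely that $\delta_g:=1-\mu(X_g)>0$ for every $g\neq 1$.

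\textbf{Step 1 (Følner sets from the mean).} First we run the standard Namioka/Day argument. We approximate $\mu$ weak$^*$ by finitely supported probability vectors $f\in\ell^1(X)$ that are almost conjugation invariant in $\ell^1$-norm. We then take level sets of $f$. Given a finite $F\subset G$ and $\varepsilon>0$, this produces a finite nonempty $A\subset X$ with $|gAg^{-1}\triangle A|<\varepsilon|A|$ for all $g\in F$. The level-set step must be done while keeping control of the fixed-point densities. Since $\mu(X_g)=1-\delta_g$, we may simultaneously arrange that $f$ assigns mass at most $1-\delta_g+\varepsilon$ to $X_g$ for $g\in F\setminus\{1\}$. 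We then choose the level set so that $|A\cap X_g|\le(1-\delta_g+2\varepsilon)|A|$. This works by a pigeonhole/averaging over the layer-cake decomposition $f=\int \mathbf 1_{\{f>t\}}\,dt$: the bad levels for invariance and the bad levels for the fixed-point density both carry small total weight.

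\textbf{Step 2 (almost homomorphism into $\mathrm{Sym}(A)$).} Next we define $\sigma(g)\in\mathrm{Sym}(A)$ by letting it agree with $h\mapsto ghg^{-1}$ on $A\cap g^{-1}Ag$ and extending arbitrarily to a bijection. Since conjugation is a genuine action, $\sigma(gk)$ and $\sigma(g)\sigma(k)$ agree off a set of proportion $O(\varepsilon)$ for $g,k\in F$. Moreover, for $g\in F\setminus\{1\}$ the normalized Hamming distance of $\sigma(g)$ from the identity is at least $\delta_g-O(\varepsilon)$. Taking an ultraproduct over $(F,\varepsilon)$, this gives a homomorphism $G\to\prod_\omega \mathrm{Sym}(A_n)$ in which each $g\neq 1$ is sent at distance at least $\delta_g>0$ from the identity. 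The bound is positive for each $g$ but is not necessarily close to $1$.

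\textbf{Step 3 (amplification).} Finally we invoke the Elek--Szabó amplification trick: separation bounded away from $0$ for each nontrivial element already implies soficity. One passes to tensor powers $\sigma^{\otimes k}$ acting on $A^k$ (or uses the corresponding statement for $\mathrm{Sym}$-ultraproducts). The fixed-point proportion becomes $(1-\delta_g)^k\to 0$, so any fixed finite set of nontrivial elements eventually has separation arbitrarily close to $1$, while the multiplicativity defect grows only linearly in $k$. Choosing $k$ first and $\varepsilon$ afterwards yields genuine sofic approximations.

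I expect the main obstacle to be Step 1. The delicate part is extracting a single finite set that is simultaneously almost conjugation invariant and faithful to the lower bound $\mu(X\setminus X_g)\ge\delta_g$ for all $g\in F$. This is the point where the layer-cake averaging has to be done carefully rather than quoted from the usual Følner argument.
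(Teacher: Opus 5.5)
\textbf{Gap in Step 1.} The overall architecture is sound, and Steps 2 and 3 are fine: finite almost-invariant models extracted from $\mu$, an almost action by conjugation, then Elek--Szab\'o amplification. This makes your argument a self-contained reconstruction of what the paper simply imports from Elek--Szab\'o. But your justification of Step 1 fails at exactly the point you flag as delicate.

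Write $A_t=\{f>t\}$, $w(t)=|A_t|$ (a probability density in $t$, since $\|f\|_1=1$) and $r_g(t)=|A_t\cap X_g|/|A_t|$. The layer-cake formula only gives $\int r_g\,w\,dt=f(X_g)\le 1-\delta_g+\varepsilon$. Markov's inequality then bounds the $w$-weight of the levels with $r_g(t)>1-\delta_g+2\varepsilon$ by $(1-\delta_g+\varepsilon)/(1-\delta_g+2\varepsilon)$. That is close to $1$, not small, so the claim that the bad levels for fixed-point density carry small weight is false.

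Concretely, let $f=\tfrac12 u_{S_1}+\tfrac12 u_{S_2}$, where $u_S$ is the uniform probability on $S$, $S_1\subset X_{g_1}\setminus X_{g_2}$ and $S_2\subset X_{g_2}\setminus X_{g_1}$ are disjoint, and $|S_1|\ll|S_2|$. Then $f(X_{g_1})=f(X_{g_2})=\tfrac12$. Yet the only level sets are $S_1$, with $g_1$-fixed proportion $1$, and $S_1\cup S_2$, with $g_2$-fixed proportion near $1$. Nothing in your Step 1 rules out such an $f$; it arises naturally when $\mu$ is a convex combination of invariant means whose pieces are approximated at very different scales. So no single level set need satisfy $|A\cap X_g|\le(1-\delta_g+2\varepsilon)|A|$ for all $g\in F$ at once.

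\textbf{Two repairs.} Either one suffices.

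(i) Skip level sets entirely. Replace $f$ by a rational vector $f(x)=n_x/M$ and use the multiset $\tilde A=\{(x,i):1\le i\le n_x\}$. Set $\sigma(g)(x,i)=(gxg^{-1},i)$ whenever $i\le n_{gxg^{-1}}$, and extend arbitrarily to a bijection. The points where $\sigma(g)$ is not honest conjugation have proportion at most $\|g\cdot f-f\|_1$, so the multiplicativity defects are $O(\varepsilon)$. An honest point is fixed iff $x\in X_g$, so the fixed proportion of $\sigma(g)$ is at most $f(X_g)+O(\varepsilon)\le 1-\delta_g+O(\varepsilon)$, for all $g\in F$ simultaneously. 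The finite model does not need to be a subset of $X$.

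(ii) Keep level sets but aim lower and treat one element at a time. Since $0\le 1-r_{g_0}\le 1$ and $\int(1-r_{g_0})\,w\,dt\ge\delta_{g_0}-\varepsilon$, the levels with $r_{g_0}(t)\le 1-\delta_{g_0}/2$ carry weight at least $\delta_{g_0}/2-\varepsilon$. This beats the small weight of the levels that are bad for $F$-invariance, once $\varepsilon$ is chosen after $F$ and $\min_{g\in F\setminus\{1\}}\delta_g$. So for each $g_0\in F\setminus\{1\}$ you get an $F$-almost invariant $A^{(g_0)}$ on which $g_0$ moves a proportion at least $\delta_{g_0}/2-O(\varepsilon)$ of points. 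Now take the product almost action on $\prod_{g_0}A^{(g_0)}$. Fixed-point proportions multiply and defects add, so every $g\in F\setminus\{1\}$ moves a proportion at least $\delta_g/2-O(\varepsilon)$, with total defect $O(|F|\varepsilon)$. A positive lower bound of this kind is all your Step 3 needs.
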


\begin{theorem}
If $G$ is inner amenable group which is either finitely presented and sofic or i.c.c. and not sofic, then $G$ is strongly 1-bounded.
\end{theorem}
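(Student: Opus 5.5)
The proof splits into two cases.

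\emph{Case 1: $G$ finitely presented and sofic.} Here I will use Shlyakhtenko's result \cite{Shl2015}. For finitely presented sofic groups, strong 1-boundedness of $L(G)$ follows once the first $L^2$-Betti number vanishes, $\beta^{(2)}_1(G)=0$. (More precisely, $\delta_0$ of the canonical generators is $\beta_1^{(2)}(G)-\beta_0^{(2)}(G)+1$, and in the finitely presented sofic setting this is upgraded to strong 1-boundedness when the value is $\le 1$.) So it is enough to show that inner amenable groups have vanishing first $L^2$-Betti number.

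If $G$ is amenable this is classical. If $G$ is nonamenable and inner amenable, I will invoke the result that inner amenable groups satisfy $\beta^{(2)}_1(G)=0$; this is precisely the content of \cite{TuckerMeans}, where vanishing is deduced from a conjugation-invariant mean on $G\setminus\{1\}$. The key technical point is to check that the hypotheses of \cite{Shl2015} are exactly ``finitely presented, sofic, $\beta_1^{(2)}=0$''. I expect this check to be routine.

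\emph{Case 2: $G$ icc and not sofic.} Let $\mu$ be a conjugation-invariant mean on $X=G\setminus\{1\}$. By Proposition \ref{elek}, the set $N=\{g:\mu(X_g)=1\}$ is nontrivial. The first step is to check that $N$ is a normal subgroup of $G$ that is moreover ``$\mu$-central'':
\begin{itemize}
\item $X_{gh}\supseteq X_g\cap X_h$, and finite intersections of $\mu$-full sets are $\mu$-full, so $N$ is closed under products;
\item $X_{g^{-1}}=X_g$, so $N$ is closed under inverses;
\item $X_{kgk^{-1}}=kX_gk^{-1}$ together with conjugation invariance of $\mu$ gives normality.
\end{itemize}

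The second step is to extract from $\mu$ approximately central unitaries in $L(G)$ that commute exactly with a diffuse subalgebra. Pick $1\ne g\in N$. Then $\mu$-almost every $x$ commutes with $g$, so $\mu$-almost every $x$ lies in the centralizer $C_G(g)$, and $\mu$ concentrates on $C_G(g)\setminus\{1\}$. Since $G$ is icc, $g$ has infinite conjugacy class, so $\{g\}$ generates a diffuse abelian subalgebra $A=L(\langle g\rangle)$ when $g$ has infinite order. If $g$ has finite order, I would instead pick $g\in N$ of infinite order if one exists, or else use the whole of $N$.

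I then aim for the following 1-bounded entropy argument from \cite{Hayes2018}. The mean $\mu$ restricted to $C_G(g)$ witnesses that $L(C_G(g))$ contains a sequence of unitaries $u_{x}$ asymptotically commuting with the generators of $L(G)$. Combined with the fact that $C_G(g)\ni g$, this produces a chain of diffuse subalgebras, each with $h\le 0$, linking $A$ to the whole of $L(G)$. The tools are:
\begin{itemize}
\item $h(N)\le 0$ for a diffuse $N$ that is regular or quasi-regular in $M$ implies $h(M)\le 0$;
\item the normalizing and asymptotic-commutation permanence properties: if $M$ admits a diffuse subalgebra with $h\le 0$ and a sequence of unitaries asymptotically central in $M$, then $h(M)\le 0$.
\end{itemize}
The latter applies to $L(N)$ and to the inner amenability sequence. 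Finally, $h(L(G))\le 0$ is equivalent to strong 1-boundedness for $L(G)$ with finite generating set \cite{Hayes2018} (here Jung's definition only needs some finite generating set, and $G$ non-sofic is harmless because $h\le0$ trivially when there are no microstates).

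Indeed, a subtlety suggests a simpler route. If $G$ is not sofic, it is plausible that $L(G)$ is not Connes-embeddable in a way that makes microstate spaces empty. Then $h(L(G))=-\infty$, which is trivially strongly 1-bounded. The catch is that non-soficity of $G$ does not imply that $L(G)$ fails to embed in $R^\omega$, so I cannot use this shortcut and must rely on the structural argument above.

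The main obstacle is this second step: turning the nontriviality of $N$ into an honest diffuse $h\le 0$ subalgebra with enough normalizing or commuting unitaries, especially when all elements of $N$ have finite order. There I would first try taking $L(N)$ itself. $N$ is an infinite normal subgroup, being nontrivial and normal in an icc group, so $L(N)$ is diffuse and regular in $L(G)$. I would then show $h(L(N))\le 0$ using the $\mu$-almost-central elements of $C_G(N)$ that $\mu$ provides, and conclude by regularity.
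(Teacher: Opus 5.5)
\textbf{Verdict: genuine gap in Case 2.}

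Your Case 1 is the paper's argument: \cite[Corollary 6]{TuckerMeans} gives $\beta^{(2)}_1(G)=0$, and then \cite{Shl2015} applies. Your check that $N$ is a normal subgroup is also fine. In Case 2 your closing plan is the paper's skeleton: $L(N)$ is diffuse and regular, get $h\le 0$ for it from elements supplied by $\mu$, then apply Hayes's normalizer theorem. But you leave open the step that actually produces $h\le 0$, and the mechanisms you offer for it do not work.
\begin{itemize}
\item $\mu$ restricted to $C_G(g)$ does not give unitaries $u_x$ asymptotically commuting with the generators of $L(G)$. Since $\|u_xu_s-u_su_x\|_2\in\{0,\sqrt2\}$, such $x\neq 1$ would eventually have to commute exactly with each generator, which is impossible in a finitely generated icc group. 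In general, a nontrivial sequence of this kind is a central sequence, i.e.\ property Gamma, and inner amenability does not imply Gamma (Vaes's example). Inner amenability only yields almost conjugation-invariant vectors in $\ell^2(G\setminus\{1\})$.
\item $\mu$ does not provide elements of $C_G(N)$. You only know $\mu(X_h)=1$ for each single $h\in N$. Since $\mu$ is merely finitely additive, $C_G(N)=\bigcap_{h\in N}X_h$ can be trivial. For example, take $G=S_\infty$ and $\mu$ a weak$^*$ limit of the uniform measures on transpositions supported in $\{n+1,\dots,2n\}$; then $N=G$ and $C_G(N)=\{1\}$.
\item Elements lying outside $L(N)$ do not control $h(L(N))$. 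They control the entropy $h(L(N):L(G))$ of $L(N)$ in the presence of $L(G)$, which is precisely what the normalizer theorem uses.
\end{itemize}

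The missing idea is to use only finite intersections. For finite $F\subseteq N$ one has $\mu(\bigcap_{h\in F}X_h)=1$, so this set is nonempty. It is in fact infinite, because conjugation invariance and infinite conjugacy classes force $\mu$ to have no atoms. Exhaust $N$ by finite sets $F_1\subseteq F_2\subseteq\cdots$ and choose $g_i\neq 1$ in $\bigcap_{h\in F_i}X_h$. This gives sequences that eventually commute with each element of $N$ without lying in $C_G(N)$. The $\omega$-classes of all such sequences form an infinite group, and its group von Neumann algebra sits unitally inside $L(N)'\cap L(G)^\omega$, so this relative commutant is diffuse. The orders of elements of $N$ play no role, which disposes of your finite-order worry. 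Then \cite[Proposition 4.5]{Hayes2018} gives $h(L(N):L(G))=h(L(N):L(G)^\omega)\le 0$. By \cite[Theorem 1.3]{Hayes2018},
\[
h(L(G))=h(W^*(\mathcal{N}_{L(G)}(L(N))):L(G))=h(L(N):L(G))\le 0,
\]
and strong 1-boundedness follows from \cite[Proposition A.16]{Hayes2018}.
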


\begin{proof}
If $G$ is sofic and finitely presented, then the result follows from combining \cite[Corollary 6]{TuckerMeans} with the main result of \cite{Shl2015} (see also \cite{JungL2B, vanishingl2bettis1b}). So suppose $G$ is i.c.c. and not sofic. Consider $G$ acting on $X=G\setminus \{1\}$ by conjugation and set $\mu$ to be the conjugation invariant mean. Define the set $N= \{g\in G: \ \mu(X_g)=1\}$ where $X_g$ is the centralizer of $g$. Since $G$ is not sofic, we have by Proposition \ref{elek} that $N\ne \{1\}$. Since $G$ is i.c.c., it follows that $N$ is infinite. For any finite set $F_n= \{h_1,\hdots, h_n\}\subset N$, observe that there exists an element $g_n\in G$ such that $g_nh_i=h_ig_n$ for all $1\leq i\leq n$. Indeed, $\mu(X_{h_i})=1$ for all $1\leq i\leq n$, and so $\mu(\cap_{i=1}^{n} X_{h_i})=1$. Therefore $\cap_{i=1}^{n} X_{h_i}\neq \emptyset$. Now, let $N= \{h_1, h_2, \hdots\}$, and consider $N= \bigcup_{i=1}^{\infty} F_i$, where $F_i= \{h_1,\hdots h_i\}$. By the previous procedure, we have a sequence $\{g_i\}_{i=1}^{\infty}$ of non-identity group elements that eventually commutes with all of $N$. Hence, for $\omega\in \beta(\N)\setminus \N$, we have $L(N)'\cap L(G)^\omega$ is diffuse. By Proposition 4.5 of \cite{Hayes2018}, we see $h(L(N): L(G))= h(L(N):L(G)^\omega)\leq 0$. But, it is an easy observation to see that $N$ is a normal subgroup in $G$. Hence, by Theorem 1.3 of \cite{Hayes2018}, $h(L(G))= h(W^{*}(\cN_{L(G)}(L(N))): L(G))= h(L(N):L(G))\leq 0$. By Proposition A.16 of \cite{Hayes2018}, we conclude that G is strongly 1-bounded.
\end{proof}

\begin{conj}
All inner amenable groups are strongly 1-bounded.
\end{conj}

While inner amenability of a group is not known to be an invariant of its generated von Neumann algebra, it is not hard to show that it is an invariant of its reduced group $C^{*}$-algebra equipped with the canonical trace.

\begin{definition}
  Let $(A,\tau)$ be a tracial $C^*$-probability space, i.e, a unital $C^{*}$-algebra $A$ equipped with a faithful tracial state $\tau$. We say that $(A,\tau)$ is \emph{inner amenable} if there is a sequence $(\xi_{n})_{n}$ in $L^{2}(A,\tau)$ with
\begin{itemize}
    \item $\|\xi_{n}\|_{2}=1,$
    \item $\tau(\xi_{n})=0,$
    \item $\|a\xi_{n}-\xi_{n}a\|_{2}\to_{n\to\infty} 0,$ for all $a\in A$,
\end{itemize}  
\end{definition}

Observe that a group $G$ is inner amenable if and only if $(C^{*}_{\lambda}(G),\tau)$ is inner amenable where $\tau(x)=\langle{x\delta_{1},\delta_{1}\rangle}.$

\begin{conj}
Let $(A,\tau)$ be an inner amenable tracial $C^{*}$-probability space. Then $h(W^{*}(A,\tau))\leq 0.$
\end{conj}

\bibliographystyle{alpha}
	\bibliography{references}	

\end{document}